\newtheorem{teo}{Theorem}[section]
\newtheorem{prop}[teo]{Proposition}
\newtheorem{lem}[teo]{Lemma}
\newcommand{\GL}{{\rm GL}}
\newcommand{\CC}{{\mathbb C}}
\newcommand{\RR}{{\mathbb R}}
\newcommand{\ZZ}{{\mathbb Z}}
\newcommand{\QQ}{{\mathbb Q}}
\newcommand{\NN}{{\mathbb N}}
\newcommand{\PP}{{\mathbb P}}
\newcommand{\AAA}{{\mathbb A}}
\newcommand{\lto}{\longrightarrow}
\def\FP{\mathfrak{p}}
\newcommand{\cO}{{\mathcal O}}
\newcommand{\cF}{\mathcal{F}}
\newcommand{\cS}{\mathcal{S}}
\newcommand{\cL}{{\mathcal L}}
\newcommand{\ol}{\overline}
\newcommand{\wt}{\widetilde}
\title{Holomorphic curves in compact Shimura varieties.}
\author{Emmanuel Ullmo, Andrei Yafaev}
\address{Ullmo: IHES 35 Route de Chartres, 91440 Bures-sur-Yvette, France}
\email{ullmo@ihes.fr}
 \address{Yafaev: UCL, Department of Mathematics, Gower street, WC1E 6BT, London, UK}
\email{yafaev@math.ucl.ac.uk}
\begin{document}
\maketitle

\begin{abstract}
We prove a hyperbolic analogue of the Bloch-Ochiai theorem about the Zariski closure 
of holomorphic curves in abelian varieties. 
\end{abstract}

\selectlanguage{francais}
\begin{abstract}
On d\'emontre un analogue hyperbolique du th\'eor\`eme de Bloch-Ochiai sur l'adh\'erence de Zariski
d'une courbe holomorphe dans une vari\'et\'e ab\'elienne. 
\end{abstract}
\selectlanguage{english}
\tableofcontents

\section{Introduction.}

The following theorem of Bloch-Ochiai (see Chapter 9, theorem 3.9.19 of \cite{Kobayashi}) is proved using Nevanlinna theory.

\begin{teo} [Bloch-Ochiai] \label{bloch}
Let $A$ be an abelian variety and $f \colon \CC \lto A$ be a non-constant holomorphic map. Then the Zariski closure of $f(\CC)$ is a translate of an abelian subvariety.
\end{teo}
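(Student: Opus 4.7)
The plan is to reduce, via a stabilizer argument on $A$, to the statement that an entire curve in an abelian variety whose Zariski closure has trivial connected stabilizer must be constant, and then to invoke Bloch's Nevanlinna-theoretic argument for the latter.

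First I would pass to $X := \overline{f(\CC)} \subseteq A$ and translate $A$ so that $0 \in X$. Let $G := \{a \in A : a + X = X\}$, a closed subgroup scheme of $A$, and set $B := G^0$; this is an abelian subvariety. Denote by $\pi \colon A \lto A/B$ the quotient and $\bar X := \pi(X)$. By construction, the connected stabilizer of $\bar X$ in $A/B$ is trivial. If I can show that $\bar X$ is reduced to a point, then $X$ is a fibre of $\pi$, hence a translate of $B$, and the theorem follows. The problem is thereby reduced to proving: if $Y$ is an irreducible closed subvariety of an abelian variety $A'$ with $\dim Y \geq 1$ and trivial connected stabilizer, then no non-constant holomorphic map $\CC \lto A'$ has Zariski-dense image in $Y$.

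For this core assertion I would follow Bloch's original approach. The triviality of $\mathrm{Stab}(Y)^0$ is used to produce, on a smooth compactification of $Y$, many global sections of symmetric or higher-order jet bundles vanishing along an ample divisor; this is the algebraic input of Bloch-Kawamata type. Pulling such a jet differential back along an entire curve $g \colon \CC \lto A'$ with image in $Y$, and applying Nevanlinna's lemma on logarithmic derivatives to the theta functions trivialising the relevant line bundles on the universal cover $\CC^g \lto A'$, forces the pullback to vanish identically. Consequently $g$ satisfies a non-trivial algebraic differential equation and $g(\CC)$ is contained in a proper subvariety $Y' \subsetneq Y$; a Noetherian induction on $\dim Y$ then contradicts Zariski density unless $\dim Y = 0$.

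The main obstacle is this final analytic-algebraic step: producing enough jet differentials on $Y$ and controlling their vanishing order to apply Nevanlinna's second main theorem (or, equivalently, Bloch's original higher-derivative computation on the universal cover of $A'$). The stabilizer reduction and the Noetherian induction are purely formal; the deep content of Bloch-Ochiai lies in combining the group structure of $A'$ with the triviality of $\mathrm{Stab}(Y)^0$ to extract sufficient positivity of the cotangent sheaf of $Y$.
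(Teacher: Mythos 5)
The paper does not prove Theorem~\ref{bloch}: it is a classical result cited from Kobayashi (Chapter~9, Theorem~3.9.19 of \cite{Kobayashi}) and is used only as motivation for the Shimura-variety analogue (Theorem~\ref{main_thm}) that the paper actually proves. There is therefore no ``paper's own proof'' to compare against; what follows is an assessment of your sketch on its own terms.

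Your outline is the standard Nevanlinna-theoretic proof, and the reduction step is carried out correctly. The Zariski closure $X=\overline{f(\CC)}$ is irreducible (any decomposition $X=V_1\cup V_2$ would make $\CC$ a union of two proper closed analytic subsets), so $\mathrm{Stab}(X)^0=:B$ is a well-defined abelian subvariety; moreover $\mathrm{Stab}_{A/B}(\pi(X))=\pi(\mathrm{Stab}_A(X))$, since any lift $a$ of a stabilising element of $A/B$ satisfies $a+X\subset \pi^{-1}\pi(X)=X+B=X$ and likewise for $-a$, so the connected stabiliser of $\bar X$ is indeed trivial; and if $\bar X$ is a point then $X$ is contained in, hence (being $B$-stable and nonempty) equal to, a coset of $B$. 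This is exactly Bloch's reduction. The remaining core assertion --- no entire curve is Zariski-dense in a positive-dimensional $Y\subset A'$ with trivial connected stabiliser --- you correctly identify as the analytic heart of the matter, and your description of it (Bloch--Kawamata positivity giving jet differentials, the lemma on logarithmic derivatives forcing the pullback to vanish, hence a non-trivial algebraic differential relation cutting $f(\CC)$ down to a proper subvariety) is the right shape, though it is a gesture rather than a proof; the ``Noetherian induction'' you invoke at the end is not really needed once the core assertion is stated with Zariski-density as the hypothesis, since a single proper subvariety already contradicts density. No genuine gap beyond the (acknowledged and unavoidable) depth of the Nevanlinna input, which the paper itself delegates wholesale to \cite{Kobayashi}.
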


In this paper we formulate and prove an analogue of this theorem for a certain type of locally symmetric varieties, namely the compact Shimura varieties.

For notations and facts about Shimura varieties and weakly special subvarieties, we refer to \cite{UY} and references therein. 
Recall that
any hermitian symmetric domain $X$, admits a realisation $X \subset \CC^n$ (with $n = \dim(X)$)
as a bounded symmetric domain. See \cite{Mok}, Chapter 4 for details.

Recall also that given an arithmetic lattice $\Gamma \subset {\rm Aut}(X)^+$, such that the quotient $\Gamma \backslash X$ is compact, there exists a fundamental domain $\cF$ for the action of $\Gamma$ on $X$ which is an open subset of $X$ such that $\ol{\cF}$ is compact.
For a bounded hermitian symmetric domain $X \subset \CC^n$, we denote by $\partial X$ the boundary of $X$, i.e. 
$\partial X = \ol{X} \backslash X$ where $\ol{X}$ denotes the topological closure of $X$ in $\CC^n$.

For notions of Shimura data, Shimura varieties and their weakly special subvarieties we refer to \cite{DeDe}, \cite{UY} and references contained therein.
We just recall that weakly special subvarieties are defined in terms of Shimura subdata, but as shown in \cite{MoMo}, they are exactly the totally geodesic subvarieties of $\Gamma \backslash X$ and terms `weakly special' and `totally geodesic' are used in the literature interchangeably.

Let $(G,X)$ be a Shimura datum with $G$ anisotropic over $\QQ$, let  $X^+$ be a connected component of $X$ and 
$K$ a compact open subgroup of $G(\AAA_f)$. As above, $X^+ \subset \CC^n$ is a bounded symmetric domain.

We let $\Gamma$ be the intersection of $K$
with the stabiliser of $X^+$ in $G(\QQ)$.  Then $\Gamma$ is an arithmetic congruence group acting on $X^+$.

Then $S = \Gamma \backslash X^+$  is compact. 
Let $\pi \colon X^+ \lto \Gamma \backslash X^+$ be the quotient map.

\begin{teo} \label{main_thm}
Let $f \colon \CC \lto \CC^n$ be a holomorphic map such that $C=f(\CC)\cap X^+$ is non-empty. 

The components of the Zariski closure $Zar(\pi(C))$ of $\pi(C)$ are weakly special subvarities of $S$.
\end{teo}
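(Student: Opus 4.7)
The plan is to combine the Ax--Lindemann theorem for compact Shimura varieties, proved by the authors in \cite{UY}, with a Bloch/Nevanlinna-type rigidity argument adapted to bounded symmetric domains. First, it suffices to treat each irreducible component $Z$ of $Zar(\pi(C))$ separately. I would fix an irreducible analytic component $Y_0$ of $\pi^{-1}(Z)\subset X^+$ that meets $f(\CC)\cap X^+$ and pass to a connected open subset $U_0\subseteq f^{-1}(Y_0)\cap f^{-1}(X^+)$ on which $\pi(f(U_0))$ remains Zariski dense in $Z$. After this reduction $Z$ is irreducible, and it is enough to exhibit $Z$ as the image of an algebraic subvariety of $\CC^n$ on which Ax--Lindemann can be applied.

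The core analytic step is to produce an algebraic subvariety $W\subset\CC^n$ such that $V:=W\cap X^+\subseteq Y_0$ and $f(U_0)\subseteq V$, so that in particular $\pi(V)$ remains Zariski dense in $Z$. This is the hyperbolic counterpart of the Nevanlinna/Bloch analysis at the heart of the classical Bloch--Ochiai theorem. It rests on four ingredients: the entire character of $f$; the boundedness of $X^+\subset\CC^n$, which bounds the growth of $f|_{U_0}$; the compactness of a fundamental domain $\ol{\cF}$ (guaranteed by $G$ anisotropic over $\QQ$), which forces the $\Gamma$-translates of $f(U_0)$ to accumulate; and the Kobayashi hyperbolicity of $X^+$, which controls the Brody-type limit curves arising from those accumulations. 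Together these inputs should allow one to identify the cluster set of $f(U_0)$ in $X^+$ with the trace on $X^+$ of an algebraic variety.

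Granted such a $V$, the Ax--Lindemann theorem of \cite{UY} applies directly: $Zar_S(\pi(V))$ is a weakly special subvariety of $S$. On the one hand $\pi(V)\subseteq\pi(Y_0)\subseteq Z$, so $Zar_S(\pi(V))\subseteq Z$. On the other hand $\pi(f(U_0))\subseteq\pi(V)$, so $Z=Zar_S(\pi(f(U_0)))\subseteq Zar_S(\pi(V))$. Equality therefore holds and $Z$ is weakly special, which is the desired conclusion for the component.

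The hard part is the construction of $V$ in the second step: one must extract genuine algebraicity from a purely transcendental entire curve. Whereas Bloch--Ochiai uses the translation structure and the exponential map of the abelian variety to manufacture this rigidity, here one has to substitute the hyperbolic structure of $X^+$ and the arithmeticity of $\Gamma$. Without this analytic input, the Ax--Lindemann theorem alone produces only a weakly special subvariety \emph{containing} $Z$, not $Z$ itself; pinning down equality is precisely what the hyperbolic analogue of the second main theorem is needed for.
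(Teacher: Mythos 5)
Your reduction is logically sound as far as it goes, but the proposal has a genuine gap concentrated in what you yourself call ``the hard part'': the construction of an algebraic $W\subset\CC^n$ with $f(U_0)\subseteq W\cap X^+\subseteq Y_0$. Notice that this intermediate claim is essentially equivalent to the theorem: if such a $W$ existed, then (as you observe) Ax--Lindemann applied to $W\cap X^+$ finishes the proof in two lines, so nothing would remain. Worse, it is not at all clear that such a $W$ exists in general until \emph{after} one knows $Z$ is weakly special, at which point $Y_0$ itself is the trace of an algebraic set; so the step you defer is not merely ``a hyperbolic second main theorem'' but the whole content of the result. The ingredients you list (boundedness of $X^+$, Kobayashi hyperbolicity, Brody limits, accumulation of $\Gamma$-translates) do not, as stated, produce algebraicity of any subset of $\CC^n$ containing the transcendental curve $f(U_0)$, and no mechanism is proposed for converting accumulation of translates into an algebraic equation.

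The paper's actual strategy is structurally different and avoids this trap by never trying to show that $C=f(U\cap B(0,R))$ is contained in an algebraic set. Instead it defines a definable set $\Sigma(C)=\{g\in G(\RR):\dim(gC\cap\pi^{-1}(V)\cap\cF)=1\}$ inside the group, and the technical heart is a counting theorem: $C$ meets exponentially many translates $\gamma\cF$ (measured in the word metric), so $\Sigma(C)$ contains polynomially many rational points of each height. Nevanlinna/Cartan theory enters here, but only to bound the Fubini--Study volume of $\gamma^{-1}\lambda(C)$ uniformly via Crofton's formula --- a volume estimate, not an algebraicity statement. Pila--Wilkie then yields a positive-dimensional semi-algebraic $W\subset\Sigma(C)\subset G(\RR)$ with $W\cdot C\subset\pi^{-1}(V)$; for a point $P\in C$, the orbit $W\cdot P$ sits inside a genuine algebraic subset $Y_P\subset\pi^{-1}(V)$, and it is to $Y_P$ (not to $C$) that Ax--Lindemann is applied. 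This only gives that $V$ contains a Zariski-dense family of weakly special subvarieties, so the paper needs an additional step (the main theorem of \cite{U} plus a product decomposition and induction) to conclude that $V$ is weakly special --- a step your proposal would not need, but which is the price for a construction that actually goes through. Finally, there is also a preliminary reduction (Proposition \ref{olivia}) to a ball $B(0,R_i)$ so that $C$ is definable in $\RR_{an}$ and touches $\partial X$; your proposal has no analogue of this and does not address definability at all.
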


This result is partly inspired by the following so-called hyperbolic Ax-Lindemann theorem whose slightly different but equivalent formulation is proven in 
\cite{UY}, Th\'eor\`eme 1.3 in the co-compact case and in \cite{KUY} for all Shimura varietes.

\begin{teo} \label{ax_lin}
With the notations of theorem \ref{main_thm}, let $Y$ be an algebraic subset of $X^+$
(i.e. a component of an intersection of an algebraic subset of $\CC^n$ with $X^+$).

The components of the Zariski closure of $\pi(Y)$ are weakly special subvarieties of $S$.
\end{teo}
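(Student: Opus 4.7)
The plan follows the Pila-Zannier strategy via o-minimality and point-counting, as developed in \cite{UY}. By decomposing $Y$ into irreducible components and $Zar(\pi(Y))$ into its irreducible components, I may reduce to the case where $Y$ is irreducible and $V := Zar(\pi(Y))$ is irreducible; the goal is to show $V$ is weakly special. Setting $\tilde V := \pi^{-1}(V) \subset X^+$, I replace $Y$ by an irreducible algebraic subset of $X^+$ of maximal dimension contained in $\tilde V$ whose image $\pi(Y)$ is still Zariski-dense in $V$. By \cite{MoMo} it then suffices to show that this maximal $Y$ is the preimage in $X^+$ of a totally geodesic subvariety of $S$, i.e.\ is a weakly special subvariety of $X^+$.

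\textbf{Definability.} I fix a semi-algebraic fundamental domain $\cF$ for $\Gamma$ on $X^+$ with $\ol\cF$ compact, which is possible since $S$ is compact. The restriction $\pi|_{\cF}$ is then definable in the o-minimal structure $\RR_{an,exp}$. Consequently, since $Y$ is complex algebraic in $X^+$ and $V$ is complex algebraic in $S$, the set
$$D := \{\, g \in G(\RR)^+ : gY \cap \cF \neq \emptyset \ \text{and}\ \pi(gY \cap \cF) \subset V \,\}$$
is definable in $\RR_{an,exp}$. Crucially, every $\gamma \in \Gamma$ with $\gamma Y \cap \cF \neq \emptyset$ lies in $D$, because $\pi(\gamma Y) = \pi(Y) \subset V$ forces $\gamma Y \subset \tilde V$.

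\textbf{Counting and Pila-Wilkie.} Compactness of $\Gamma \backslash X^+$ combined with a standard lattice-point/volume estimate shows that the number of $\gamma \in \Gamma$ of arithmetic height $\leq T$ with $\gamma Y \cap \cF \neq \emptyset$ is bounded below by $c T^\alpha$ for constants $c, \alpha > 0$. The Pila-Wilkie counting theorem applied to $D$ then forces $D$ to contain a connected positive-dimensional real semi-algebraic set $B$, for otherwise the transcendental part of $D$ would contain only $O(T^\varepsilon)$ such $\gamma$'s. By analytic continuation of the closed condition $gY \subset \tilde V$, the real algebraic variety $\Sigma := \{g \in G(\RR)^+ : gY \subset \tilde V\}$ contains $B$ and is hence positive-dimensional. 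The connected real algebraic subgroup $H^0 \subset G(\RR)^+$ generated by $\Sigma \cdot \Sigma^{-1}$ near the identity is therefore non-trivial, satisfies $H^0 \cdot Y \subset \tilde V$, and by maximality of $Y$ actually preserves $Y$.

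\textbf{Main obstacle: $\QQ$-descent to a Shimura subdatum.} The hardest step is to promote the real stabilizer $H^0$ to a $\QQ$-algebraic subgroup $H \subset G$ underlying a Shimura sub-datum. Using arithmeticity of $\Gamma$ together with a Borel-density / Zariski-closure argument applied to the many $\gamma \in \Gamma$ accumulating near $H^0$, one shows that the $\QQ$-algebraic hull of $H^0$ is a connected $\QQ$-subgroup $H \subset G$ whose real points stabilize $Y$. A Mumford-Tate/monodromy analysis (applied at any point $x \in Y$) then shows that $(H, X_H)$ with $X_H := H(\RR)^+ \cdot x$ is a Shimura sub-datum and $X_H \subset Y$; by maximality of $Y$ one has $X_H = Y$, so $Y$ is weakly special in $X^+$ and therefore $V = \pi(Y)$ is a weakly special subvariety of $S$, completing the proof.
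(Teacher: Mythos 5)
The paper does not prove Theorem \ref{ax_lin}: immediately after the statement, the authors cite its proof to \cite{UY} (Th\'eor\`eme 1.3, co-compact case --- the case relevant here) and to \cite{KUY} (general Shimura varieties). The hyperbolic Ax--Lindemann theorem is a black-box input to the present paper, not one of its results, so there is no internal proof to compare against.

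Taken on its own terms, your sketch follows the overall Pila--Zannier strategy of \cite{UY} and \cite{KUY}, but each of the three places where the real work happens is hand-waved or slightly wrong. (1) The lower bound $T^{\alpha}$ on the number of $\gamma \in \Gamma$ with $\gamma Y \cap \cF \ne \emptyset$ and $H(\gamma) \le T$ is \emph{not} a ``standard lattice-point/volume estimate''; it is the technical heart of \cite{UY}, requiring a comparison of the Bergman metric of $X^+$ with the Poincar\'e metric of a disc inside $Y$ approaching $\partial X^+$, plus a word-length versus height comparison for $\Gamma$. Indeed, Section 3 of the present paper has to rework exactly this point for the holomorphic curve $C$ in place of an algebraic $Y$. (2) The set $\Sigma = \{g \in G(\RR)^+ : gY \subset \tilde V\}$ is definable but is \emph{not} a real algebraic variety, since $\tilde V = \pi^{-1}(V)$ is a transcendental analytic subset of $X^+$; Pila--Wilkie hands you a positive-dimensional semi-algebraic $B \subset \Sigma$, not algebraicity of $\Sigma$. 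Passing from $B\cdot Y \subset \tilde V$ to a genuine positive-dimensional algebraic stabiliser $\Theta_Y = \{g : gY = Y\}$ is a separate step that really uses the maximality of $Y$. (3) The $\QQ$-descent you label ``the hardest step'' is not actually carried out: ``Borel density / Zariski closure applied to the $\gamma$ accumulating near $H^0$'' is a heuristic, not an argument. In \cite{KUY} one shows $\Theta_Y$ contains infinitely many elements of $\Gamma$, deduces that its $\QQ$-Zariski closure is a positive-dimensional $\QQ$-subgroup normalised by the connected monodromy group of $V$, and then runs a structure-theoretic and inductive argument to conclude that $Y$ is weakly special. As written, your proposal asserts the conclusion of this step without supplying a proof.
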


The proof of \ref{main_thm} relies on the theory of o-minimality and the Pila-Wilkie theorem and is inspired by the 
proof of the hyperbolic Ax-Lindemann theorem in the co-compact case as in \cite{UY}.
The proof also uses in an essential way the hyperbolic Ax-Lindemann theorem itself and the results of \cite{U}.

An analogous question in the context of abelian varieties has been investigated  in \cite{UY1}. In that paper we have not been able to re-prove Bloch-Ochiai theorem 
using o-minimal techniques. We however obtained a result analogous to \ref{main_thm} in the abelian context for certain sets definable in the usual o-minimal structures. Our result in \cite{UY1} is in some ways more general than the Bloch-Ochiai theorem.
It is surprising and interesting that the obstructions to prove the Bloch-Ochiai theorem using o-minimality do not occur in the hyperbolic case we consider here, however additional serious difficulties arise which we overcome in section 3. 

The strategy of the proof is as follows. We start by decomposing $f^{-1}(f(\CC)\cap X^+)$ as a union of connected components $U_i \subset \CC$.
For a given $i$ we prove that for some $R_i>0$, it is in fact enough to prove the conclusion for $\pi \circ f(U_i \cap B(0,R_i))$ where $B(0,R_i)$ is the open ball centered at the origin of radius $R_i$. This is done in section \ref{prem}.

We now set $C_i = f(U_i \cap B(0,R_i))$. Section 3 is the technical heart of the proof. 
The analytic curve $C_i$ in $X^+$ is definable in the o-minimal structure $\RR_{an}$
(here $\CC^n$ is identified with $\RR^{2n}$). 
For o-minimality, related notions and results we refer to \cite{VDD}.
We fix a fundamental domain $\cF$ for the action of $\Gamma$ on $X^+$.

We let $V_i$ be the Zariski closure of $\pi(C_i)$ and $\wt{V_i}$ be $\pi^{-1}(V_i) \cap \cF$.
We associate to $C_i$ a certain definable (in $\RR_{an}$) set $\Sigma \subset G(\RR)$ and show
that $\Sigma \cdot C_i \subset \wt{V}_i$. 
The main technical work is to prove that $\Sigma$ contains \emph{a lot} of points of $G(\QQ)$ of height up to $T$. Pila-Wilkie theorem then allows us to conclude that $\Sigma$ contains a positive dimensional semi-algebraic subset $W$ and the hyperbolic Ax-Lindemann theorem allows us to conclude that $V_i$ contains a Zariski dense set of weakly special subvarieties.
Using results from \cite{U} and some additional arguments, we conclude the proof of theorem \ref{main_thm}.

\section*{Acknowledgements.}

The second author is very grateful to the IHES for hospitality during his visits in May and September 2016.
The second author gratefully acknowledges 
financial support of the ERC, Project 511343.

\section{Preliminaries.} \label{prem}

Keep notations as in Theorem \ref{main_thm}. For simplicity of notation, we write $X$ for $X^+$. 
Let
$$
f^{-1}(f(\CC)\cap X) = \coprod_{i\in I} U_i
$$ 
be the decomposition of  $f^{-1}(f(\CC)\cap X)$ into connected components.
By definition of the $U_i$, for each $i$, we have
$$
\ol{f(U_i)}\cap \partial X \not= \emptyset.
$$

For $R_i > 0$ large enough, we have
$$
\ol{f(B(0,R_i)\cap U_i)} \cap \partial X \not= \emptyset
$$
(where $B(0,R_i)$ is the open ball of radius $R_i$ centered at the origin).
For each $i$, we fix an $R_i$ with this property.
\begin{prop} \label{olivia}
We have
$$
Zar(\pi\circ f(U_i)) = Zar(\pi \circ f(B(0,R_i)\cap U_i).
$$
\end{prop}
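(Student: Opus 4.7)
The plan is to prove the two inclusions separately. One direction is immediate: since $B(0,R_i)\cap U_i\subset U_i$, we have $\pi\circ f(B(0,R_i)\cap U_i)\subset \pi\circ f(U_i)$, and taking Zariski closures gives
$$
Zar(\pi\circ f(B(0,R_i)\cap U_i))\subset Zar(\pi\circ f(U_i)).
$$

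For the reverse inclusion, denote $V:=Zar(\pi\circ f(B(0,R_i)\cap U_i))$, an algebraic (hence closed analytic) subvariety of the compact Shimura variety $S=\Gamma\backslash X^+$. I would then pull $V$ back in two stages. First, $\pi^{-1}(V)$ is a closed analytic subset of $X^+$, because $\pi$ is a holomorphic covering map and $V$ is closed analytic in $S$. Second, restricting $f$ to $U_i$ (which is an open subset of $\CC$, hence a complex manifold), the preimage
$$
A \ :=\ (f|_{U_i})^{-1}\bigl(\pi^{-1}(V)\bigr)
$$
is a closed analytic subset of $U_i$.

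Now the key observation is that $A$ contains the open subset $B(0,R_i)\cap U_i$ of $U_i$. This set is non-empty by the choice of $R_i$ (since $\overline{f(B(0,R_i)\cap U_i)}\cap\partial X\neq\emptyset$ forces $f(B(0,R_i)\cap U_i)\neq\emptyset$), and it is open because $U_i$ is open in $\CC$. Since $U_i$ is a connected complex manifold of dimension one, any closed analytic subset of $U_i$ containing a non-empty open set must equal $U_i$ itself, by the identity principle for holomorphic functions. Hence $A=U_i$, which means $f(U_i)\subset \pi^{-1}(V)$ and therefore $\pi\circ f(U_i)\subset V$. Taking Zariski closure yields
$$
Zar(\pi\circ f(U_i))\subset V=Zar(\pi\circ f(B(0,R_i)\cap U_i)),
$$
completing the proof.

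There is really no serious obstacle here: the argument is a standard analytic continuation combined with the fact that a Zariski closed subset of $S$ pulls back to a closed analytic subset of $X^+$. The only point requiring any care is confirming that $B(0,R_i)\cap U_i$ is a non-empty open set in $U_i$, which is why the defining property of $R_i$ (non-empty intersection with $\partial X$ after closure) was imposed in the first place.
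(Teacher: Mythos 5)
Your proof is correct and takes essentially the same approach as the paper: both rely on analytic continuation over the connected set $U_i$. The paper pulls back sections of $\mathcal{O}(l)$ vanishing on $\pi\circ f(B(0,R_i)\cap U_i)$ and invokes the identity principle, while you phrase the same idea in terms of the preimage $(f|_{U_i})^{-1}(\pi^{-1}(V))$ being a closed analytic subset of $U_i$ containing a non-empty open set, hence all of $U_i$.
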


\begin{proof}
One inclusion is obvious.
Write $Zar(\pi\circ f(B(0,R_i)\cap U_i)) \subset \PP^m$ for some $m$ and let $s \in H^0(\PP^m, {\mathcal O}(l))$ for 
$l\geq 1$ such that $s$ is zero on $\pi\circ f(B(0,R_i)\cap U_i)$.
Then the function $s \circ f \circ \pi \colon U_i \lto \CC$ is zero on
$B(0,R_i)\cap U_i)$. Since $U_i$ is connected, by analytic continuation, the function $s \circ f \circ \pi$ is zero on $U_i$.
It follows that $s$ is zero on $\pi\circ f(U_i)$. This proves the other inclusion.
\end{proof}

In this paper we will prove the following:

\begin{teo} \label{main_thm1}
The Zariski closure of $\pi \circ f(B(0,R_i)\cap U_i)$ contains a Zariski dense subset of
weakly special subvarieties.
\end{teo}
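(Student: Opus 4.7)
The plan is to implement the Pila--Zannier strategy sketched in the introduction. Set $C_i = f(U_i \cap B(0,R_i))$, which by construction is a bounded analytic curve in $X \subset \CC^n$ whose closure meets $\partial X$. Fix a definable fundamental domain $\cF$ for the action of $\Gamma$ on $X$ (as in \cite{KUY}), and set $V_i = Zar(\pi(C_i))$ and $\widetilde V_i = \pi^{-1}(V_i) \cap \cF$. Both $C_i$ and $\widetilde V_i$ are definable in $\RR_{an}$.

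First, I attach to $C_i$ the definable set
$$
\Sigma = \{ g \in G(\RR) : g\cdot C_i \subset \widetilde V_i \},
$$
which is definable in an appropriate o-minimal expansion of $\RR_{an}$. By construction $\Sigma \cdot C_i \subset \widetilde V_i$, and $\Sigma$ contains the identity. The key observation is that whenever $\gamma \in \Gamma$ translates some portion of $C_i$ into $\cF$, then (up to a bookkeeping adjustment) $\gamma^{-1}$ lies in $\Sigma$, so $\Sigma$ inherits rational points from the arithmetic lattice $\Gamma$. The main technical step, and what I expect to be the central obstacle, is to prove a polynomial lower bound of the form
$$
\#\{\gamma \in \Sigma \cap G(\QQ) : H(\gamma) \leq T\} \geq c\,T^\delta
$$
for some positive constants $c,\delta$. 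This uses in an essential way that $\ol{C_i}$ meets $\partial X$: as points of $C_i$ approach the boundary, the uniformisation $\pi$ contracts long arcs of $C_i$ onto small pieces of $\cF$, so many $\Gamma$-elements of controlled complexity must send $C_i$ back into the fundamental domain. Making this idea quantitative --- so that it beats the sublinear transcendental bound of Pila--Wilkie --- is the serious difficulty alluded to in the introduction, and is more delicate here than in the hyperbolic Ax--Lindemann case of \cite{UY}, because $C_i$ is only analytic rather than algebraic; the height estimates must be driven purely by the boundary accumulation of $C_i$.

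Granting the polynomial lower bound, the Pila--Wilkie counting theorem forces $\Sigma$ to contain a positive-dimensional connected semi-algebraic subset $W$. For every $c \in C_i$ the orbit $W \cdot c$ is then an algebraic subset of $X$ whose image under $\pi$ lies in $V_i$, and the hyperbolic Ax--Lindemann theorem \ref{ax_lin} implies that $Zar(\pi(W\cdot c))$ is a weakly special subvariety $Z_c$ of $S$ contained in $V_i$ and passing through $\pi(c)$. Since $\bigcup_{c \in C_i} Z_c$ contains $\pi(C_i)$, which is Zariski dense in $V_i$ by definition, we obtain a Zariski dense family of weakly special subvarieties in $V_i$, which is exactly the content of Theorem \ref{main_thm1}. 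The results of \cite{U} are then invoked in the next section to pass from this density statement to the component-wise weakly special conclusion of Theorem \ref{main_thm}.
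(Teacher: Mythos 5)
Your overall plan matches the paper's strategy, but there are two serious problems, one a genuine error in the setup and one a gap that is precisely where all the work lies.

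\textbf{The definition of $\Sigma$ is wrong.} You set $\Sigma = \{ g \in G(\RR) : g\cdot C_i \subset \widetilde V_i \}$ with $\widetilde V_i = \pi^{-1}(V_i) \cap \cF$. Since $\ol{\cF}$ is compact while $C_i$ accumulates at $\partial X$, there is \emph{no} $g$ (and in particular not the identity, contrary to what you claim) for which $g\cdot C_i$ is contained in $\cF$; as defined, $\Sigma$ is empty. The paper instead defines
$$
\Sigma(C) = \{ g \in G(\RR) : \dim\bigl(gC \cap \pi^{-1}(V) \cap \cF\bigr) = 1 \},
$$
i.e., it only asks that $gC$ meet $\widetilde V_i$ in a one-dimensional set, and then uses analytic continuation to conclude $gC \subset \pi^{-1}(V)$ (not $\subset \widetilde V_i$). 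With this definition, every $\gamma \in \Gamma$ that translates a one-dimensional piece of $C$ into $\cF$ gives an element of $\Sigma(C)\cap\Gamma$ (this is Lemma \ref{props}(2) in the paper), and there really are exponentially many such $\gamma$. The introduction's informal phrase ``$\Sigma\cdot C_i \subset \widetilde V_i$'' is a slight misstatement that you appear to have taken literally; the precise statement is in Section 4.

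\textbf{The counting estimate is not proved.} You correctly identify the lower bound $\#\{\gamma \in \Sigma \cap G(\QQ) : H(\gamma) \leq T\} \geq c\,T^\delta$ as the heart of the matter, but you only describe the expected phenomenon without establishing it. This is exactly Theorem \ref{counting} in the paper (which gives $N_C(n)\geq e^{cn}$ in word length, converted to the polynomial height bound via $H(\gamma)\leq (mA)^{l(\gamma)}$). Proving it is the entire content of Section 3 and is where the technical novelty of the paper lies: one must bound $\int_{\gamma\cF\cap C}\omega$ uniformly in $\gamma$, which the paper does by pushing the problem to the compact dual $X_c$ via the Harish-Chandra embedding, applying Crofton's formula to reduce to bounding the number of intersections of $f(B(0,2R))$ with hyperplanes, and invoking the First Main Theorem of Nevanlinna--Cartan theory to get a bound independent of the hyperplane. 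None of this is present in your proposal, and without it the Pila--Wilkie input is unjustified. Your remark that the difficulty arises because $C_i$ is merely analytic rather than algebraic is accurate, but merely observing the difficulty is not the same as resolving it.

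A smaller issue: from $W\cdot c$ you cannot directly apply Ax--Lindemann, since $W\cdot c$ is real semi-algebraic, not complex algebraic. The paper passes through Lemma B.3 of \cite{KUY} to produce a complex algebraic $Y_P \subset \pi^{-1}(V)$ containing $W\cdot P$, and then applies Theorem \ref{ax_lin} to $Y_P$. You should include this step.
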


Let $V=V_i$ be the Zariski closure of $\pi \circ f(B(0,R_i)\cap U_i)$.
The theorem \ref{main_thm1} will be deduced form the following:

\begin{teo} \label{semialg}
There exists a positive dimensional semialgebraic set $W$ in $G(\RR)$
such that 
$$W\cdot f(B(0,R_i)\cap U_i) \subset  \pi^{-1}(V).$$
\end{teo}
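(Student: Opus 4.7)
The plan is to produce a set $\Sigma \subset G(\RR)$, definable in $\RR_{an}$ (possibly after a suitable bounding), containing the lattice $\Gamma$, and satisfying $\Sigma \cdot C_i \subset \pi^{-1}(V)$ for $C_i := f(B(0,R_i)\cap U_i)$. Once this is in place, the Pila--Wilkie theorem will supply a connected positive-dimensional semi-algebraic $W \subset \Sigma$, and the conclusion $W \cdot C_i \subset \pi^{-1}(V)$ will follow tautologically from the defining property of $\Sigma$.

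First I would fix a fundamental domain $\cF$ for $\Gamma$ on $X^+$ with $\overline{\cF}$ compact (by co-compactness), and set $\wt V := \pi^{-1}(V) \cap \cF$. This $\wt V$ is definable in $\RR_{an}$ since $V$ is algebraic and $\pi|_\cF$ is definable; likewise the bounded analytic set $C_i$ is definable, being the image of a bounded definable disk under a restricted analytic map. I would then define
$$\Sigma := \{\, g \in G(\RR) : g \cdot C_i \subset \pi^{-1}(V)\,\}.$$
Although $\pi^{-1}(V) = \bigcup_{\gamma \in \Gamma} \gamma \wt V$ is only $\Gamma$-periodic and not globally definable, for any $g$ in a fixed bounded region of $G(\RR)$ the compact set $g \cdot \overline{C_i}$ meets only finitely many translates $\gamma \cF$, so the defining condition locally reduces to a finite definable conjunction of the form $\bigwedge_{\gamma \in F} (g \cdot C_i \cap \gamma \cF \subset \gamma \wt V)$. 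This local definability is enough for Pila--Wilkie applied to the bounded region containing all $G(\QQ)$-points of height $\le T$. The $\Gamma$-invariance of $\pi^{-1}(V)$ gives $\Gamma \subset \Sigma$ immediately.

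The main technical step, and the expected principal obstacle, is to establish the polynomial lower bound
$$|\{\, g \in \Sigma \cap G(\QQ) : H(g) \le T\,\}| \gg T^{\delta}$$
for some $\delta > 0$. Since $\Gamma$ is an arithmetic lattice in the semisimple $\QQ$-group $G$ (anisotropic over $\QQ$), the classical count $|\{\gamma \in \Gamma : H(\gamma) \le T\}| \gg T^{\delta}$, combined with the inclusion $\Gamma \subset \Sigma$, yields the desired bound. The delicate point lies in the interplay between the arithmetic height on $G(\QQ)$ and the hyperbolic geometry of $X^+$: the hypothesis that $\overline{C_i}$ meets $\partial X$ (so that $C_i$ has infinite hyperbolic length) has to be used to ensure that the $\Gamma$-translates $\gamma \cdot C_i$ are spread out enough that the semi-algebraic set $W$ produced by Pila--Wilkie is genuinely positive-dimensional, rather than collapsing into a trivial algebraic locus in $G$ already visible from $C_i$ alone. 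Once $W$ is obtained, the inclusion $W \cdot f(B(0,R_i) \cap U_i) \subset \pi^{-1}(V)$ is immediate from the definition of $\Sigma$, completing the proof.
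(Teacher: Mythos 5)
Your proposal has a genuine gap, and it sits precisely where you try to avoid the hard work of Section 3 of the paper.

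You define $\Sigma = \{g \in G(\RR) : g\cdot C_i \subset \pi^{-1}(V)\}$, observe that $\Gamma \subset \Sigma$ (correct, since $C_i \subset \pi^{-1}(V)$ and $\pi^{-1}(V)$ is $\Gamma$-invariant), and then propose to invoke the classical count $|\{\gamma \in \Gamma : H(\gamma)\le T\}| \gg T^\delta$ together with Pila--Wilkie. The problem is that this $\Sigma$ is not definable in any o-minimal structure: $\pi^{-1}(V)$ is a $\Gamma$-invariant countable union of copies of an analytic set, and its lack of definability is exactly the reason $\Gamma\backslash X$ itself is not definable. Your fix---restrict to a bounded region where only finitely many translates of $\cF$ intervene---does not work, because the region containing the $G(\QQ)$-points of height $\le T$ grows with $T$, and with it the number of translates in the conjunction. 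The Pila--Wilkie constant $c(\Theta,\epsilon)$ depends on the definable set $\Theta$; a family of sets growing without bound in complexity gives no uniform estimate. The paper avoids this by building the restriction into the definition: it sets
$$\Sigma(C) = \{g\in G(\RR) : \dim(gC \cap \pi^{-1}(V)\cap\cF) = 1\},$$
where the intersection with the \emph{fixed} fundamental domain $\cF$ is what makes the set definable in $\RR_{an}$, and then recovers the global inclusion $gC\subset \pi^{-1}(V)$ by analytic continuation.

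The second and deeper issue is that once you pass to the paper's definable $\Sigma(C)$, you no longer have $\Gamma \subset \Sigma(C)$. The lattice points in $\Sigma(C)$ are exactly those $\gamma$ for which $\gamma C$ meets $\cF$ in a positive-dimensional set, equivalently those $\gamma$ with $\dim(\gamma^{-1}\cF \cap C)=1$. Counting these is not the ``classical'' lattice-point count you cite; it is the content of Theorem 3.1 of the paper and occupies all of Section 3 (Bergman kernel comparison, Crofton's formula in the compact dual, and a Cartan--Nevanlinna bound on the number of hyperplane intersections). Your remark that the infinite hyperbolic length of $C_i$ is needed ``to ensure $W$ is genuinely positive-dimensional'' misplaces the role of that hypothesis: it is needed to get the \emph{input} to Pila--Wilkie, namely a polynomial lower bound on rational points in the \emph{definable} set $\Sigma(C)$, not to analyze the output $W$. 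In short, the proposal treats the counting as routine and the definability as a technicality; the paper's proof shows it is the other way around.
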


To deduce \ref{main_thm1} from \ref{semialg}, let $P \in f(B(0,R_i)\cap U_i)$.
For the notion of algebraic subset of $X$, we refer to Appendix B of \cite{KUY}.
There exists a complex algebraic subset $Y_P \subset \pi^{-1}(V)$ such that
$W \cdot P \subset Y_P$ (see \cite{KUY}, Lemma B.3). By Ax-Lindemann theorem \ref{ax_lin}, the Zariski closure of
$\pi(Y_P) \subset V$ is weakly special. Therefore, through each point of $\pi f(B(0,R_i)\cap U_i)$
there passes a weakly special subvariety and hence $V$ contains a dense set of weakly special subvarieties.

We will now prove that theorem \ref{main_thm} follows from theorem \ref{main_thm1}.
Let $V$ now be a component of the Zariski closure of $\pi(f(\CC)\cap X)$. 
By theorem \ref{main_thm1}, $V$ contains a Zariski dense set of weakly special subvarieties.

If $V$ is a special subvariety, then we are done. Assume that $V$ is not special.
By the main theorem of \cite{U}, there exists a special subvariety $S' \subset S$ containing $V$ and such
that $S' = S_1 \times S_2$ ($S_i$ special and positive dimensional) and such that
$$
V = S_1 \times V'
$$
where $V'$ is a subvariety of $S_2$.

There exists a sub-Shimura datum $(G',X')$ of $(G,X)$ and a decomposition
$$
(G'^{ad}, X'^{ad}) = (G_1,X_1) \times (G_2, X_2)
$$
such that $S_1 = \Gamma_1 \backslash X_1$ and $S_2 = \Gamma_2 \backslash X_2$ (where as usual we omit the superscript $+$)
 and $\Gamma_1$ and $\Gamma_2$ are suitable arithmetic lattices in $G_1(\QQ)^+$ and $G_2(\QQ)^+$.

Let $\FP_1 \cong \CC^{r_1}$ and $\FP_2 \cong\CC^{r_2}$ be the holomorphic tangent spaces to $X_1$ and $X_2$.
Then $\FP_1 \times \FP_2$ is a subspace of the holomorphic tangent space $\FP \cong \CC^n$ to $X$.
Let
$$
f^{-1}(f(\CC) \cap X) = \coprod_{i \in I} U_i.
$$
be as before, the connected component decomposition.
There exists a $U_i$ such that the restriction, $f \colon U_i \lto \CC^n$
factors through $\CC^{r_1} \times \CC^{r_2}$. By analytic continuation $f \colon \CC \lto \CC^n$
factors through $\CC^{r_1} \times \CC^{r_2}$.

Let $f_1$ and $f_2$ be the holomorphic functions from $\CC$ to $\CC^{r_1}$ and $\CC^{r_2}$ respectively
such that $f = (f_1, f_2)$.

Similarly, write
$$
f_2^{-1}(f_2(\CC) \cap X_2) = \coprod_{j \in J} V_j.
$$
the connected component decomposition.

For any $i \in I$, there exists an $j \in J$ such that $U_i \subset V_j$ because 
both  $f_2(U_i)$ are contained in  $X_2$ respectively.

It follows that for any $i \in I$, there exists $j \in J$,
$$
Zar(\pi_2\circ f_2 (U_i)) = Zar(\pi_2 \circ f_2 (V_j)).
$$
Note that $V$ is the Zariski closure of  the union of the
$\pi_2\circ f_2 (U_i)$. Therefore $V$ is the Zariski closure of 
$$
\bigcup_{i\in I} Zar(\pi_2\circ f_2 (U_i)).
$$
By theorem \ref{main_thm1}, $Zar(\pi_2\circ f_2 (U_i)) = Zar(\pi_2 \circ f_2 (V_j))$ contains a Zariski dense set of weakly special subvarieties.

It follows that $V'$ contains a Zariski dense set of weakly special subvarieties of $S_2$. An inductive argument finishes the proof
theorem \ref{main_thm} assuming theorem \ref{main_thm1}.

\section{Counting lattice elements.}

In this section we show that $f(U_i)$ (as in the previous section) in $X$ intersects ``exponentially many'' (in a suitable sense) $\Gamma$-translates of a fixed fundamental domain. This section constitutes the technical heart of the paper.

Recall the following notations from \cite{UY}. 
Let $X$ be a connected Hermitian symmetric domain (as usual we omit the superscript $+$), realised as a bounded  symmetric domain in  some $\CC^n$. We let $C$ to be $f(B(0,R_i)\cap U_i)$ with $R_i$ and $U_i$ as in the previous section. 

Let $\Gamma$ be a cocompact arithmetic lattice in the group $G$ of holomorphic isometries of $X$.
For a point $x_0 \in X$, we let $\cF$ be a fundamental domain 
for the action of $\Gamma$ on $X$ such that $x_0 \in \cF$. We assume that $\cF$ is an open connected set such that $\ol{\cF}$ is compact.
The set 
$$
\cS_{\cF} = \{ \gamma \in \Gamma : \gamma \cF \cap \cF \not= \emptyset \}
$$
is finite and generates $\Gamma$. 

The ``word metric'' $l\colon \Gamma \lto \NN$ with respect to $\cS_{\cF}$ is defined as follows $l(1)=0$ and for $\gamma \not= 1$, $l(\gamma)$ is the minimal number of elements of $\cS_{\cF}$ needed to write $\gamma$ as their product.

We also let $K(Z,W)$ be the Bergmann kernel on $X$ and we let
$$
\omega = \sqrt{-1} \partial \ol{\partial} K(Z,Z)
$$
be the associated K\"ahler form.
We refer to \cite{Mok} 4.1 for details on this.

We define the following functions:

$$
N_C(n) = |\{ \gamma \in \Gamma : \dim(\gamma \cF \cap C) = 1, l(\gamma )\leq n \}|
$$

and 

$$
N'_C(n) = |\{ \gamma \in \Gamma : \dim(\gamma \cF \cap C) = 1, l(\gamma ) =  n \}|.
$$

The main result of this section is the following theorem:

\begin{teo} \label{counting}
There is a positive constant $c$ such that for all $n\gg 0$, we have
$$
N_C(n) \geq e^{cn}.
$$
\end{teo}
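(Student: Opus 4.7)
The plan is an area-comparison argument. First, by the Milnor--Svarc lemma applied to the proper cocompact action of $\Gamma$ on $(X,d_{\mathrm{hyp}})$, the word length on $\Gamma$ is commensurable with the hyperbolic displacement at $x_0$: there exists $A>0$ such that $l(\gamma) \leq A\, d_{\mathrm{hyp}}(x_0,\gamma x_0) + O(1)$ for all $\gamma\in\Gamma$. Consequently $\bigcup_{l(\gamma)\leq n}\gamma\ol{\cF} \subset B_{\mathrm{hyp}}(x_0,A'n)$ for some $A'>0$, and it is enough to prove that $C$ meets at least $e^{cn}$ distinct translates $\gamma\ol{\cF}$ inside $B_{\mathrm{hyp}}(x_0,A'n)$.

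Next, I would establish two complementary estimates for the Bergmann K\"ahler form $\omega$: a uniform upper bound
$$
\int_{C\cap\gamma\cF}\omega \leq M \qquad\mbox{for all } \gamma\in\Gamma,
$$
and an exponential lower bound
$$
\int_{C\cap B_{\mathrm{hyp}}(x_0,R)}\omega \;\geq\; \alpha\,e^{\beta R} \qquad\mbox{for } R\gg 0.
$$
Together they give $N_C(n) \geq (\alpha/M)\,e^{\beta A'n}$. The upper bound follows from two facts: the $\gamma\ol{\cF}$ are isometric copies of the relatively compact set $\ol{\cF}$, and the preimages $f^{-1}(\gamma\cF)$ are pairwise disjoint open subsets of the bounded parameter set $B(0,R_i)\cap U_i$. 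A Lelong-type bound on the Bergmann area of each branch of a holomorphic curve inside a fixed compact set then makes $M$ uniform in $\gamma$.

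The exponential lower bound is the main obstacle and will absorb most of the technical work of the section. Its geometric source is that a connected, two-real-dimensional holomorphic curve reaching $\partial X$ must spread exponentially in the Bergmann area, analogously to the exponential area $\pi \sinh^2(R)\asymp e^{2R}$ of a totally geodesic complex disk in a hyperbolic ball of radius $R$. To make this precise I would use the standard asymptotic $d_{\mathrm{hyp}}(x_0,z) \asymp -\log\delta(z)$ (with $\delta(z)$ the Euclidean distance from $z$ to $\partial X$) to rewrite $C\cap B_{\mathrm{hyp}}(x_0,R)$ on the parameter side as the set of $w\in B(0,R_i)\cap U_i$ with $\delta(f(w)) \gtrsim e^{-R/c}$, and then lower-bound $\int f^{*}\omega$ over this set using the blow-up of the Bergmann kernel near $\partial X$ together with a Stokes or mean-value argument applied to the $dd^c$-exact form $\omega = \sqrt{-1}\,\partial\ol{\partial}\,K(Z,Z)$. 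The hypothesis $\ol{f(B(0,R_i)\cap U_i)}\cap\partial X \neq \emptyset$ provides exactly the boundary-concentrated contribution needed to extract exponential growth; pinning down the precise value of $\beta$, which depends on both the Bergmann asymptotics and the geometry of $B(0,R_i)\cap U_i$ near the relevant boundary arc, is the most delicate part.
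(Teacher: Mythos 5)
Your overall shape matches the paper's: a uniform upper bound on the Bergmann area of each piece $C\cap\gamma\cF$, combined with an exponential lower bound on total area near $\partial X$, yields the exponential count; and the Milnor--\v{S}varc reduction plays the same role as Lemma 2.1 of \cite{UY}. But both technical pillars have genuine gaps. For the upper bound, a ``Lelong-type bound'' runs in the wrong direction: Lelong's inequality gives a \emph{lower} bound on area, and there is no uniform \emph{upper} bound on the Bergmann (equivalently Euclidean) area of an arbitrary holomorphic curve, or even one connected branch, inside a fixed compact $\ol\cF$ --- the curve can wind without limit. As $\gamma$ varies, $\gamma^{-1}C$ is a different curve each time, and disjointness of the preimages $f^{-1}(\gamma\cF)$ in $B(0,R_i)$ does not help because $f^*\omega$ is unbounded near the part of $\partial U_i$ mapping to $\partial X$. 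The paper's missing ingredient is global: embed $X$ into the compact dual $X_c\subset\PP^n$ via Harish--Chandra, use Crofton's formula to turn the area of $\gamma^{-1}\lambda(C)$ into an average of intersection numbers with hyperplanes, transport those back to intersection numbers of the \emph{fixed} curve $\lambda(C)$ using the $\Gamma$-equivariance of $\cO(1)|_{X_c}$, and then bound them uniformly over all hyperplanes by the First Main Theorem of Cartan--Nevanlinna theory applied to the fixed entire $f$ on the fixed ball $B(0,2R)$. This is exactly where the hypothesis that $C$ is traced by an entire map enters, and your proposal never invokes it.

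For the lower bound, rather than fighting the Bergmann-kernel blow-up on the (stratified, non-smooth for rank $>1$) boundary of $X$, the paper reparametrizes a piece of $C$ near a boundary point $b\in\ol C\cap\partial X$ by a sector $\Delta_{\alpha,\beta}$ of the unit disk via a real-analytic $\psi$ sending the circular arc into $\partial X$, and then invokes Lemma 2.8 of \cite{UY} to get $\psi^*\omega = s\,\omega_\Delta + \eta$ with $\eta$ smooth and $s$ a positive integer. This collapses the area-growth estimate to the elementary computation that the Poincar\'e area of the annulus $\{e^{-(n+1)}\le 1-|z|^2\le e^{-n}\}$ in the sector grows like $e^n$, combined with the word-length comparison. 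Without an analogue of Lemma 2.8 --- which is a nontrivial statement about how $\omega$ restricts to curves approaching $\partial X$ --- your Stokes/mean-value plan has no handle on the rank-$>1$ boundary geometry, and that is where the proposal would stall.
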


Let $b$ be a point of the boundary of $\ol{C} \cap \partial X$ and a neighbourhood $V_b$ of $b$ such that
$\ol{C} \cap \partial X \cap V_b$ is a real analytic curve.

We parametrise $\ol{C} \cap \partial X \cap V_b$ as follows. For $0<\alpha, \beta <2\pi$, let
$\Delta_{\alpha, \beta}$ be the sector of the unit disc $\Delta$ defined as follows:
$$
\Delta_{\alpha, \beta} = \{ z = re^{i\theta} :  0 \leq r  \leq 1, \alpha \leq \theta \leq \beta  \}.
$$
Let $C_{\alpha, \beta}$ be the subset of
$\partial \Delta_{\alpha, \beta}$ defined as
$$
C_{\alpha,\beta} = \{ z = e^{i\theta} : \alpha \leq \theta \leq \beta \}.
$$
We can find $\alpha$, $\beta$ and a real analytic map $\psi$ from a neignbourhood of $\Delta_{\alpha,\beta}$ to $\CC^n$
such that 
$\psi (\Delta_{\alpha, \beta}) \subset \ol{C} \cap X$
and $\psi(C_{\alpha, \beta}) \subset \ol{C} \cap \partial X$.

Let $\Delta$ be the open unit disk.
We let $\omega_{\Delta}$ be the usual Poincar\'e $(1,1)$-form on  $\Delta$
($\omega_\Delta = \sqrt{-1}\frac{dz \wedge d\ol{z}}{(1-|z|^2)^2}$).
By lemma 2.8 of \cite{UY}, there exists a smooth $(1,1)$-form $\eta$ on $\Delta_{\alpha, \beta}$ such that
$$
\psi^* \omega = s \omega_{\Delta} + \eta
$$
for some integer $s>0$.

Let $\gamma \in \Gamma$ be such that $\dim(\gamma \cF \cap C) = 1$ and $\gamma \cF \cap C \subset \psi(\Delta_{\alpha, \beta})$, then
\begin{equation} \label{volume}
\int_{\gamma \cF \cap C} \omega = s \int_{\psi^{-1}(\gamma \cF \cap C)} \omega_{\Delta} + 
\int_{\psi^{-1}(\gamma \cF \cap C)} \eta
\end{equation}

\begin{prop} \label{laurent}
There exists a constant $B$ such that for any $\gamma \in \Gamma$ such that
$\dim(\gamma \cF \cap C) = 1$, we have
$$
\int_{\gamma \cF \cap C} \omega \leq B.
$$ 
\end{prop}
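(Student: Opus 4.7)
The plan is to bound the integral uniformly in $\gamma$ by reducing to the local expansion \eqref{volume} after covering the compact set $\ol{C}$ by finitely many coordinate patches. Since $C=f(B(0,R_i)\cap U_i)$ with $f$ continuous on the closed disk, $\ol{C}$ is compact in $\CC^n$, and in particular $\ol{C}\cap\partial X$ is compact. At each boundary point $b\in\ol{C}\cap\partial X$ the construction preceding the proposition produces a neighborhood $V_b$ of $b$ and a real analytic parametrization $\psi_b\colon\Delta_{\alpha_b,\beta_b}\to\CC^n$ satisfying $\psi_b^*\omega=s_b\,\omega_\Delta+\eta_b$, with $\eta_b$ smooth on the compact sector. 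By compactness I extract a finite subcover $V_1,\dots,V_N$ of $\ol{C}\cap\partial X$ with associated data $\psi_j,s_j,\eta_j,\Delta_j:=\Delta_{\alpha_j,\beta_j},C_j:=C_{\alpha_j,\beta_j}$, and set $K:=\ol{C}\setminus\bigcup_j V_j$, which is a compact subset of $C\cap X$ bounded away from $\partial X$.

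For any $\gamma\in\Gamma$ with $\dim(\gamma\cF\cap C)=1$, I decompose
$$\int_{\gamma\cF\cap C}\omega \;\le\; \int_{\gamma\cF\cap K}\omega \;+\; \sum_{j=1}^N\int_{\gamma\cF\cap C\cap V_j}\omega.$$
The first summand is bounded by $\int_K\omega$, a finite constant, since $K$ is a compact piece of the analytic curve $C$ in the interior of $X$ on which $\omega$ is bounded and $K$ has finite $2$-dimensional measure. For each $j$, formula \eqref{volume} applied to $\psi_j$ yields
$$\int_{\gamma\cF\cap C\cap V_j}\omega = s_j\int_{A_j}\omega_\Delta + \int_{A_j}\eta_j, \qquad A_j:=\psi_j^{-1}(\gamma\cF\cap C\cap V_j),$$
and the $\eta_j$-integral is bounded by $\|\eta_j\|_\infty\cdot\mathrm{area}(\Delta_j)$, uniformly in $\gamma$, because $\eta_j$ is smooth on the compact sector $\Delta_j$.

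The heart of the argument, and the step I expect to be the main obstacle, is a uniform bound on $\int_{A_j}\omega_\Delta$ independent of $\gamma$. The key geometric input is that $\gamma\cF$ has Bergmann diameter equal to $\mathrm{diam}(\cF)$, a fixed constant, since $\omega$ is $G$-invariant. The expansion $\psi_j^*\omega=s_j\omega_\Delta+\eta_j$ with $\eta_j$ bounded and $\omega_\Delta$ blowing up near $C_j$ shows that $\psi_j^*\omega\sim s_j\omega_\Delta$ asymptotically near the boundary arc, so that $\psi_j$ acts there as a quasi-isometry (up to the scale $\sqrt{s_j}$) from the hyperbolic metric on $\Delta_j^o$ to the Bergmann metric on $X$. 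Consequently the portion of $\psi_j^{-1}(\gamma\cF)$ lying in a fixed small neighborhood of $C_j$ has hyperbolic diameter bounded in terms of $\mathrm{diam}(\cF)$ only, hence uniformly bounded hyperbolic area; the remaining portion lies in a fixed compact subset of $\Delta_j^o$ on which $\omega_\Delta$ is bounded and contributes a uniformly bounded amount. Summing the bounds over the finite cover produces the required constant $B$. The delicate point is making the quasi-isometry statement rigorous, for which I would combine the explicit asymptotic expansion of $\psi_j^*\omega$ with an Ahlfors--Schwarz type comparison between the pullback hyperbolic structure on $\Delta_j^o$ and the restriction of the Bergmann metric to $\psi_j(\Delta_j^o)$, drawing on the full content of Lemma 2.8 of \cite{UY}.
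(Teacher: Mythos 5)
Your proposed proof takes a route fundamentally different from the paper's, and it contains a genuine gap at the key step. The paper does not prove the proposition by estimating $\int_{A_j}\omega_\Delta$ at all. Instead it uses $\Gamma$-invariance of $\omega$ to write $\int_{\gamma\cF\cap C}\omega = \int_{\gamma^{-1}(\gamma\cF\cap C)}\omega$, compares $\omega$ with $\lambda^*\omega_{FS}$ on the compact set $\ol{\cF}$ (where $\lambda$ is the Harish--Chandra embedding into the compact dual $X_c$), and reduces to bounding $\int_{\gamma^{-1}\lambda(C)}\omega_{FS}$ uniformly in $\gamma$. That last bound is where all the work is: it is established via the generalized Crofton formula, which expresses the Fubini--Study volume as an average of hyperplane intersection numbers, together with the first main theorem of Cartan--Nevanlinna theory which bounds those intersection numbers for the entire curve $f$ uniformly over hyperplanes. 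In fact the paper then uses Proposition~\ref{laurent} together with equation~\eqref{volume} to \emph{deduce} the bound on $\int_{\psi^{-1}(\gamma\cF\cap C)}\omega_\Delta$; your proof runs this implication in the opposite direction, so the direction of logic is essentially reversed relative to the paper.

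The gap in your argument is the passage from ``$\gamma\cF$ has bounded Bergmann diameter'' to ``$A_j=\psi_j^{-1}(\gamma\cF\cap C\cap V_j)$ has bounded hyperbolic diameter.'' The identity $\psi_j^*\omega = s_j\omega_\Delta + \eta_j$ makes $\psi_j$ a quasi-isometry from $(\Delta_j,\omega_\Delta)$ to $\psi_j(\Delta_j)\subset C$ equipped with its \emph{intrinsic} $\omega$-metric, so the hyperbolic diameter of $A_j$ is controlled by the intrinsic diameter of $\gamma\cF\cap C\cap V_j$ as a subset of the curve $C$. But $\mathrm{diam}(\gamma\cF) = \mathrm{diam}(\cF)$ only bounds the \emph{extrinsic} distance in $X$ between points of $\gamma\cF\cap C$: a priori the curve $C$ could wind back and forth a great many times inside a single translate $\gamma\cF$, making the intrinsic diameter (equivalently, the length or area of $C\cap\gamma\cF$) arbitrarily large. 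Controlling this is precisely the content of the proposition, so your argument is essentially circular at this point. The paper's detour through the compact dual is exactly what breaks the circle: Nevanlinna theory gives an a priori, $\gamma$-uniform bound on the projective degree of $\gamma^{-1}\lambda(C)$, which is a global input independent of how $C$ sits relative to any particular fundamental domain.
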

\begin{proof}
We consider the compact dual $X_c$ of $X$ which is a projective algebraic variety. Let $\cL$ be the dual of the canonical line bundle endowed with a $G(\CC)$-invariant metric $||.||_{FS}$. We let $\omega_{FS}$ the associated $(1,1)$-form: 
$\omega_{FS} = c_1(\cL, ||.||_{FS})$.

By Harish-Chandra embedding theorem, there is a biholomorphism $\lambda$ from $\FP \cong \CC^n$ to an open dense subset of $X_c$. 
For details, see Theorem 1, section 5.2 of \cite{Mok}.

Let $\gamma \in \Gamma$ be such that $\gamma \cF \cap C \not= \emptyset$. Since $\omega$ is 
$\Gamma$-invariant, we have 
$$
\int_{\gamma \cF \cap C} \omega = \int_{\gamma^{-1}(\gamma \cF \cap C)} \omega.
$$
On the compact set $\ol{\cF}$, the two forms $\omega$ and $\lambda^*(\omega_{FS})$ are positive holomorphic forms, therefore there is a constant $B_1$ such that on $\cF$, we have
$$
\omega \leq B_1 \lambda^* (\omega_{FS}).
$$

We have
$$
\int_{\gamma \cF \cap C} \omega \leq B_1 \int_{\gamma^{-1}(\gamma \cF \cap C)} \lambda^*(\omega_{FS}).
$$
Furthermore,
$$
\int_{\gamma^{-1}(\gamma \cF \cap C)} \lambda^*(\omega_{FS}) \leq \int_{\gamma^{-1}\lambda(C)}\omega_{FS}.
$$
The conclusion of proposition \ref{laurent} follows from the following lemma that will be proven in the following section. :
\begin{lem} \label{luca}
There is a constant $B_2$ such that for all $\gamma \in \Gamma$, we have
$$
\int_{\gamma^{-1}\lambda(C)}\omega_{FS} \leq B_2.
$$
\end{lem}
\end{proof}

\subsection{Proof of lemma \ref{luca}.}

The volume of the analytic curve $\lambda(C)$ is defined as
$$
{\rm Vol}(\lambda(C)) = \int_{\lambda(C)} \omega_{FS}. 
$$

Let $\PP^{n\vee}$ be the dual projective space, the set of hyperplanes in $\PP^n$.
Let $dJ$ be the invariant volume element on $\PP^{n\vee}$ normalised to have total mass one.

By Generalised Crofton's formula (see \cite{Alex} and references therein), we have
$$
{\rm Vol}(\gamma^{-1}\lambda(C)) = \alpha \int_{\PP^{n\vee}} n_{\gamma^{-1}\lambda(C)}(J) dJ.
$$
where $\alpha$ is a uniformisation constant and $n_{\gamma^{-1}\lambda(C)}(J)$ is the number of points (counted with multiplicity) 
of the
intersection of $\gamma^{-1}\lambda(C)$ with $J$.
Note that the function $n_{\gamma^{-1}\lambda(C)}(J)$ is a function defined on the open subset of
$\PP^{n\vee}$ consisting of hyperplanes $J$ such that $\gamma^{-1}\lambda(C)$ is not contained $J$.
The complement of this open set is of measure zero, therefore the integral is well defined.

\begin{lem}
Let $J$ be a hyperplane in $\PP^n$ and $\gamma \in \Gamma$.
There exists a hyperplane $J'$ such that 
$$
n_{\gamma^{-1}\lambda(C)}(J) = n_{\lambda(C)}(J').
$$ 
\end{lem}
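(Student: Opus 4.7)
The plan is to take $J' := \gamma J$, regarding $\gamma$ as a projective linear transformation of the ambient projective space. Hyperplanes are then mapped to hyperplanes, and biholomorphisms preserve intersection multiplicities with any analytic curve, so the equality of the two counting functions will be immediate once the action is set up correctly.

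The real content of the lemma is therefore to promote the action of $\gamma \in \Gamma \subset G(\RR) \subset G(\CC)$ on $X_c$ to an action on the ambient $\PP^n$ by a projective linear automorphism. The hypothesis that $\cL$ admits a $G(\CC)$-invariant metric $||\cdot||_{FS}$ makes $\cL$ canonically a $G(\CC)$-equivariant line bundle on $X_c$; the $G(\CC)$-action therefore lifts linearly to $H^0(X_c, \cL^{\otimes m})$ for every $m \geq 1$. Choosing $m$ so that $\cL^{\otimes m}$ is very ample, we obtain a $G(\CC)$-equivariant Kodaira embedding $X_c \hookrightarrow \PP(H^0(X_c, \cL^{\otimes m})^\vee)$, which we identify with the $\PP^n$ of the statement. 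Under the resulting projective linear representation of $G(\CC)$, the element $\gamma$ acts on $\PP^n$ by a projective linear automorphism $[\gamma]$ extending its action on $X_c$, and in particular carries hyperplanes bijectively to hyperplanes.

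Having this in hand, one sets $J' := [\gamma] J$, a hyperplane of $\PP^n$. The map $p \mapsto [\gamma] p$ restricts to a biholomorphism $\gamma^{-1}\lambda(C) \to \lambda(C)$ and induces a bijection of $\gamma^{-1}\lambda(C) \cap J$ onto $\lambda(C) \cap J'$ that preserves intersection multiplicities at each point. Hence $n_{\gamma^{-1}\lambda(C)}(J) = n_{\lambda(C)}(J')$, as required. Note that if $J$ is among the measure-zero locus of hyperplanes containing $\gamma^{-1}\lambda(C)$, then $J'$ contains $\lambda(C)$, so the restriction of the Crofton integrand to this locus is irrelevant and the bijection $J \mapsto J'$ respects the domain of definition of the intersection counts.

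The only step requiring care is the $G(\CC)$-equivariance of the projective embedding, but I do not anticipate a real obstacle: it is a classical consequence of the $G(\CC)$-invariance of the metric on an ample equivariant line bundle on a Hermitian symmetric space of compact type, implicit in the framework of \cite{Mok}, section 5.2. Every other ingredient (projective linear maps take hyperplanes to hyperplanes, biholomorphisms preserve intersection multiplicities) is automatic.
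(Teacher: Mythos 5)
Your proposal is correct and is essentially the same argument as the paper's: the paper takes the section $s$ of $\cL=\cO(1)|_{X_c}$ cutting out $J\cap X_c$, observes that $\gamma^*s$ is again the restriction of a section of $\cO(1)$ (this is precisely the $G$-equivariance of the embedding that you spell out), and sets $J'$ to be the corresponding hyperplane, so that $\gamma(J\cap X_c)=J'\cap X_c$ and the counting is transported by the bijection $\gamma$. You make the equivariant structure on $\cL$ and the induced projective-linear extension of $\gamma$ to $\PP^n$ explicit, which is exactly the implicit step in the paper; the two proofs coincide in substance.
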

\begin{proof}
Recall that $\cL$ is $\Gamma$-invariant and $\cL$ is very ample i.e. $\cL = \cO(1)|_{X_c}$.
Write $s$ the section of $\cL$ such that 
$J \cap X_c = {\rm div}(s)$.
Let $s' = \gamma^* s$. Then $s'$ is a restriction of a section of $\cO(1)$ corresponding to some hyperplane $J'$ and we thus have
$$
\gamma (J \cap X_c) = J' \cap X_c.
$$
Therefore
$$
\lambda(C) \cap \gamma (J \cap X_c) = \lambda(C) \cap (J' \cap X_c).
$$

We also have 
$$
\lambda(C) \cap \gamma (J \cap X_c) = \gamma ( \gamma^{-1} \lambda(C) \cap J ).
$$

We conclude using the fact that 
$$
|\gamma ( \gamma^{-1} \lambda(C) \cap J )| = n_{\gamma^{-1}\lambda(C)}(J).
$$
\end{proof}

We finish by proving a general lemma:

\begin{lem}
Let $f \colon \CC \lto \PP^n(\CC)$ be a holomorphic map.
Let $R >0$. 
There exists a constant $\Theta = \Theta(R,f)$  such that for any hyperplane $H$ of $\PP^n(\CC)$
such that $f(\CC)$ is not contained in $H$,
$$
| \{ f(B(0,R)) \cap H  \}| \leq \Theta.
$$
\end{lem}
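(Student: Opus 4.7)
The plan is to turn the intersection count into a zero count for a family of entire functions parametrized by hyperplanes and then to run a compactness argument using Hurwitz/Rouch\'e's theorem.

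First I would lift $f$ to a nowhere-vanishing holomorphic map $F=(f_0,\ldots,f_n)\colon\CC\lto\CC^{n+1}\setminus\{0\}$; such a lift exists because $\CC$ is Stein and $f^*\cO(1)$ is therefore a holomorphically trivial line bundle. For a hyperplane $H$ with equation $\sum_i a_iz_i=0$ set $g_a(z)=\sum_ia_if_i(z)$; then $f(z)\in H\iff g_a(z)=0$, and $f(\CC)\not\subset H\iff g_a\not\equiv 0$. The cardinality $|f(B(0,R))\cap H|$ is bounded by the number $N(a)$ of zeros of $g_a$ in $B(0,R)$ counted with multiplicity, and since $g_{ca}=cg_a$ the function $N$ descends to $\PP^{n\vee}$.

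Next I would reduce to the case where $f(\CC)$ spans $\PP^n$ projectively. Let $L\subset\PP^n$ be the smallest projective linear subspace containing $f(\CC)$. Any $H$ with $f(\CC)\not\subset H$ satisfies $L\not\subset H$, so $H\cap L$ is a hyperplane of $L$ with $f(\CC)\not\subset H\cap L$ and $f(B(0,R))\cap H=f(B(0,R))\cap(H\cap L)$. Replacing $\PP^n$ by $L$ we may assume $g_a\not\equiv 0$ for every $a\ne 0$, so $N$ becomes a finite-valued function on all of the compact space $\PP^{n\vee}$.

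It now suffices to show $N$ is locally bounded on $\PP^{n\vee}$. Fix a representative $a_\infty$ with $\|a_\infty\|=1$; since $g_{a_\infty}$ is a nonzero entire function its zero set is discrete, so there exists $R'>R$ such that $g_{a_\infty}$ does not vanish on $\{|z|=R'\}$. Let $\epsilon=\min_{|z|=R'}|g_{a_\infty}(z)|>0$ and $M=\max_{|z|\le R'}\|F(z)\|$; then for any unit vector $a$ with $\|a-a_\infty\|<\epsilon/M$ we have $|g_a(z)-g_{a_\infty}(z)|<\epsilon\le|g_{a_\infty}(z)|$ on $|z|=R'$, so by Rouch\'e's theorem $g_a$ and $g_{a_\infty}$ have the same number of zeros in $B(0,R')$. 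In particular $N(a)\le N_{R'}(a_\infty)$ on this neighbourhood, and compactness of $\PP^{n\vee}$ yields a uniform bound $\Theta(R,f)$.

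The only delicate point is the reduction to the spanning case: without it a limiting hyperplane $[a_\infty]$ could contain $f(\CC)$, forcing $g_{a_\infty}\equiv 0$ and making the Rouch\'e comparison vacuous. Once that is secured the rest is a routine Hurwitz-plus-compactness argument.
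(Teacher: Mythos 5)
Your proof is correct, and it takes a genuinely different route from the paper. The paper quotes the First Main Theorem of Cartan--Nevanlinna theory: writing $N(R,f,H)$ for the counting function and $T(R,f)$ for the order function, one has $N(2R,f,H)\le T(2R,f)+c$ with $c$ uniform in $H$; since $\log(2R/|\alpha_i|)\ge\log 2$ for $|\alpha_i|<R$, this caps the number of solutions in $B(0,R)$ by $(T(2R,f)+c)/\log 2$. Your argument instead avoids value-distribution theory entirely: you lift $f$ to $F\colon\CC\to\CC^{n+1}\setminus\{0\}$ (valid since every holomorphic line bundle on $\CC$ is trivial), reduce to the case where $f(\CC)$ is nondegenerate so that $g_a=\langle a,F\rangle\not\equiv 0$ for every $a\ne 0$, and then run a Rouch\'e--plus--compactness argument on the unit sphere of coefficient vectors to show the zero count on $B(0,R)$ is locally, hence globally, bounded. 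The nondegeneracy reduction you flag is indeed the one place that needs care and you handle it correctly: without it, a limiting hyperplane could kill $g_{a_\infty}$ identically and the Rouch\'e comparison would be vacuous. The trade-off is that the paper's proof is shorter once Nevanlinna--Cartan theory is taken as a black box and yields a quantitative bound in terms of $T(2R,f)$, whereas yours is elementary and self-contained but produces a constant only by compactness.
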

\begin{proof}
A reference for notions of Nevanlinna-Cartan theory is \cite{Kobayashi}, Chapter 3, Section B. We use notations from this reference.

Let $N(R,f,H)$ be the counting function associated to $f,R$ and $H$.
Let $\alpha_1, \dots , \alpha_t \in B(0,R)$ be complex numbers such that $f(\alpha_i) \in H$.

Let $\nu(f,\alpha_i,H)$ be the multiplicity of $f$ in $H$ at $\alpha_i$.
We have
$$
N(R,f,H) = \sum_{i=1}^t \nu(f,\alpha_i,H) \log(\frac{R}{|\alpha_i |}).
$$

Therefore

$$
N(2R,f,H) \geq \sum_{i=1}^t \nu(f,\alpha_i,H) \log(\frac{2 R}{|\alpha_i |}).
$$

We have $\log(\frac{2R}{|\alpha_i |}) \geq \log(2)$, therefore
a bound on $N(2R,f,H)$ implies a bound on $\sum_{i=1}^t \nu(f,\alpha_i,H) = | \{ f(B(0,R)) \cap H  \}|$.
 It is hence enough to bound $N(2R,f,H)$.
 
 The first main theorem of Cartan-Nevanlinna theory (\cite{Kobayashi}, 3.B.16), we have
 $$
 N(2R,f,H) \leq T(2R,f) + c 
 $$
where $c$ is a uniform constant and $T(2R,f)$ is the order function defined in \cite{Kobayashi}, 3.B.2.

Since $T(2R,f)$ does not depend on $H$, this concludes the proof.
\end{proof}

\subsection{End of proof of theorem \ref{counting}.}

As $\eta$ is smooth on $\Delta_{\alpha, \beta}$, the integral $\int_{\psi^{-1}(\gamma \cF \cap C)} \eta$ is bounded independently of $\gamma$.
Equation \ref{volume} and lemma \ref{laurent} imply that $\int_{\psi^{-1}(\gamma \cF \cap C)} \omega_{\Delta}$ is bounded
by a constant $B'$, independent of 
$\gamma$.

Recall the following lemma (Lemma 2.1) from \cite{UY}. Note that this lemma
is proved in \cite{UY} for $C$ algebraic but the algebraicity assumption is not used,
the statement and proof remain the same in our situation. In fact the proof is a combination of
some general facts about hermitian symmetric domains and word metrics. 

\begin{lem} \label{length_compare}
There exist positive constants $\lambda_1$ and $\lambda_2$ and $D$ such that for all $z \in \Delta_{\alpha,\beta}$ with $z \in \psi^{-1}(\gamma \cF \cap C)$,
$$
\lambda_1 l(\gamma) \leq - \log(1 - z \ol{z}) \leq \lambda_2 l(\gamma) + D.
$$
\end{lem}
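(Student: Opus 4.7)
The plan is to reduce the two desired inequalities to a comparison between the Bergman distance $d_X(x_0,\psi(z))$ on $X$ and the Poincar\'e distance $d_\Delta(0,z)$ on the unit disc, using two standard preliminary facts.

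First, by the Milnor--\v{S}varc lemma applied to the cocompact action of $\Gamma$ on $X$ with $x_0\in\cF$ and $\overline{\cF}$ compact, the orbit map $\gamma\mapsto\gamma x_0$ is a quasi-isometry between $(\Gamma,l)$ and $(X,d_X)$; hence there exist $a,b>0$ such that
$$
a^{-1}l(\gamma)-b\;\le\;d_X(x_0,\gamma x_0)\;\le\;a\,l(\gamma)+b
$$
for all $\gamma\in\Gamma$. Since $\psi(z)\in\gamma\cF$ one has $d_X(\psi(z),\gamma x_0)\le\mathrm{diam}(\overline{\cF})$, so $d_X(x_0,\psi(z))$ and $l(\gamma)$ agree up to an affine error. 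Second, the explicit formula $d_\Delta(0,z)=\tfrac12\log\tfrac{1+|z|}{1-|z|}$ yields $-\log(1-|z|^2)=2\,d_\Delta(0,z)+O(1)$. It is therefore enough to bound $d_\Delta(0,z)$ above and below by linear functions of $d_X(\psi(0),\psi(z))$, modulo bounded constants.

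The lower bound $\lambda_1 l(\gamma)\le-\log(1-|z|^2)$ amounts to $d_X(\psi(0),\psi(z))\le C\,d_\Delta(0,z)+O(1)$, which is the classical Ahlfors--Schwarz--Pick / Kobayashi distance-decreasing principle: the holomorphic map $\psi\colon\Delta\to X$ contracts the Kobayashi pseudo-distances, and on the bounded symmetric domain $X$ the Kobayashi and Bergman invariant metrics are mutually bi-Lipschitz.

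The upper bound $-\log(1-|z|^2)\le\lambda_2 l(\gamma)+D$ is the more delicate direction. Here I would use that $\psi$ is real analytic on a neighbourhood of $\overline{\Delta_{\alpha,\beta}}$ and sends the arc $C_{\alpha,\beta}$ into $\partial X$. Lipschitz continuity of $\psi$ on this compact set, applied to the radial projection of $z$ onto $C_{\alpha,\beta}$, yields
$$
\mathrm{dist}_{\CC^n}(\psi(z),\partial X)\;\le\;C_1(1-|z|).
$$
On the other hand, for a bounded symmetric domain $X\subset\CC^n$ the Bergman distance grows at least logarithmically as one approaches the boundary: there are constants $c,C_2>0$ with
$$
d_X(x_0,y)\;\ge\;-c\log\mathrm{dist}_{\CC^n}(y,\partial X)-C_2
$$
for all $y\in X$. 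This follows from the polynomial blow-up of the Bergman kernel $K(y,y)$ as $y\to\partial X$, together with the identity $\omega=\sqrt{-1}\,\partial\overline{\partial}\log K(y,y)$ for the K\"ahler form, as reviewed in \cite{Mok}, Ch.~4. Chaining the two estimates gives $d_X(x_0,\psi(z))\ge c\,(-\log(1-|z|))-C_3$, which by the first paragraph translates into the required upper bound on $-\log(1-|z|^2)$ in terms of $l(\gamma)$.

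The main obstacle I expect is the uniform logarithmic lower bound on the Bergman distance near $\partial X$, which has to control all directions of approach to the boundary at once. For bounded symmetric domains this is classical and flows from the explicit Harish-Chandra realization; the rest of the argument is either a quasi-isometry statement or a Lipschitz bound on a compact set, exactly as in the algebraic case treated in \cite{UY}, where the algebraicity of $C$ plays no r\^ole in this particular lemma.
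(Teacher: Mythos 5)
Your argument is correct and matches the paper's own characterization of this lemma (which it cites to \cite{UY}, Lemma 2.1, rather than proving) as ``a combination of some general facts about hermitian symmetric domains and word metrics'': Milnor--\v{S}varc plus the Schwarz--Pick/Kobayashi distance-decreasing property gives the lower inequality, and the logarithmic blow-up of the invariant distance near $\partial X$ (uniform on a bounded symmetric domain by convexity of the Harish-Chandra realisation) combined with Lipschitz control of $\psi$ up to $\overline{\Delta_{\alpha,\beta}}$ gives the upper one. The only point worth flagging is that the Schwarz--Pick step uses holomorphy of $\psi$ on the open sector; this holds because $\psi$ parametrises a complex curve, even though the paper only describes $\psi$ as ``real analytic'' (real analytic up to the boundary arc, holomorphic inside) — and in the cited reference this step is packaged instead via the pullback identity $\psi^{*}\omega = s\,\omega_{\Delta} + \eta$, which gives the same upper bound on $d_{X}(\psi(0),\psi(z))$.
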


We now follow the end of section 2 of \cite{UY}.

For $n>0$, let
$$
I_n = \{ z\in \Delta_{\alpha, \beta}, e^{-(n+1)} \leq 1 - |z|^2 \leq e^{-n}\}.
$$

The hypebolic volume of $I_n$ satisfies

$$
{\rm Vol}(I_n) \geq \delta_1 e^n
$$
where $\delta_1$ is a positive constant.

The set $I_n$ is covered by the $\phi^{-1}(\gamma \cF \cap C)$.
For each $n$ large enough and for all $z \in I_n$, by lemma \ref{length_compare}, there exists a $\gamma$ such that $\psi(z)\in \gamma \cF$ with $\gamma$ satisfying
$$
c_1 n \leq l(\gamma) \leq c_2 n
$$
with uniform constants $c_1$ and $c_2$.

On the other hand, for all $z \in \Delta_{\alpha, \beta}$, such that 
$\psi(z)\in \gamma \cF$ for some $\gamma \in \Gamma$,
$$
{\rm Vol}(\psi^{-1}(\gamma \cF \cap C)) \leq B'.
$$

Therefore, by the computation of ${\rm Vol}(I_n)$ above, there exists a $\delta_1 > 0$ such that
$$
\sum_{c_1n \leq k \leq c_2 n} N'_C(k) \geq \delta_1 e^n.
$$

This finishes the proof of theorem \ref{counting}.

\section{A definable set and application of Pila-Wilkie theorem.}

In this section we prove theorem \ref{semialg} and hence our main theorem. 
We follow section 5 of \cite{UY} with appropriate modifications.

Let $U$ be as before a connected component of $f^{-1}(f(\CC)\cap X)$ and $R$ such that
$f(U\cap B(0,R)) \cap \partial X \not= \emptyset$. 
Note that $C = f(B(0,R)\cap U)$ is definable in $\RR_{an}$. Let $\cF$ be as in the previous section. Recall (see \cite{UY}, Proposition 4.2) that
$\pi$ restricted to $\cF$ is definable in $\RR_{an}$.

Consider
$$
\Sigma(C) = \{ g \in G(\RR) : \dim(gC \cap \pi^{-1}(V) \cap \cF) = 1 \}.
$$
The set $\Sigma(C)$ is definable in $\RR_{an}$.

We prove the following.

\begin{lem}  \label{props}
\begin{enumerate}
\item For all $g \in \Sigma(C)$, $g C \subset \pi^{-1}(V)$.
\item Define
$$
\Sigma'(C) = \{ g \in G(\RR) : g^{-1}\cF \cap C \not= \emptyset \}.
$$
Then
$$
\Sigma(C)\cap \Gamma = \Sigma'(C)\cap \Gamma.
$$
\end{enumerate}
\end{lem}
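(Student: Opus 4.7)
The plan is to handle each part by transferring the hypothesis to the source and invoking analytic continuation. By the very definition of $U$ as a connected component of $f^{-1}(f(\CC)\cap X)$ one has $f(U)\subset X$, so for any $g\in G(\RR)$ the composite $g\circ f\colon U\to X$ is a well-defined holomorphic map. I assume throughout that $f$ is non-constant, the opposite case being trivial.

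For (1), fix $g\in\Sigma(C)$ and set $B=(g\circ f)^{-1}(\pi^{-1}(V))\subset U$, a closed analytic subset of the connected complex manifold $U$. The subset $B\cap(g\circ f)^{-1}(\cF)\cap B(0,R)$ is mapped by $g\circ f$ onto $gC\cap \pi^{-1}(V)\cap\cF$, which by hypothesis has complex dimension one. Since $g\circ f$ is non-constant, its generic fibre is discrete, so the preimage has complex dimension one as well, and therefore nonempty interior in $\CC$. But a closed analytic subset of a connected complex manifold with nonempty interior must be the whole manifold, so $B=U$; in particular $gC=(g\circ f)(B(0,R)\cap U)\subset \pi^{-1}(V)$.

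For (2), the starting observation is that $C\subset \pi^{-1}(V)$ by the definition of $V$ as the Zariski closure of $\pi(C)$, and $\pi^{-1}(V)$ is $\Gamma$-invariant, so $\gamma C\subset \pi^{-1}(V)$ for every $\gamma\in\Gamma$; consequently $\gamma C\cap \pi^{-1}(V)\cap\cF=\gamma C\cap \cF$. The required equality then reduces to showing that for $\gamma\in\Gamma$ one has $\dim(\gamma C\cap\cF)=1$ precisely when $\gamma^{-1}\cF\cap C\neq\emptyset$. The forward direction is trivial. For the converse, given $\gamma^{-1}\cF\cap C\neq\emptyset$ pick $z_0\in B(0,R)\cap U$ with $(\gamma\circ f)(z_0)\in\cF$; openness of $\cF$ yields a neighborhood $W$ of $z_0$ in $B(0,R)\cap U$ with $(\gamma\circ f)(W)\subset\cF$, and non-constancy of $\gamma\circ f$ forces $(\gamma\circ f)(W)$ to be a $1$-dimensional analytic set contained in $\gamma C\cap\cF$.

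I expect no serious obstacle. The only mildly delicate point is justifying that a $1$-dimensional image forces a $1$-dimensional preimage in $U$ in part (1); this is precisely what the choice of $U$ as a full connected component of $f^{-1}(f(\CC)\cap X)$ buys, because it allows $g\circ f$ to be treated as a single holomorphic function on a connected complex $1$-manifold and hence makes the identity principle for closed analytic subsets applicable. Everything else reduces to this identity principle and the $\Gamma$-stability of $\pi^{-1}(V)$.
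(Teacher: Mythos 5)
Your proof is correct and follows essentially the same route as the paper. Part (1) is the paper's "analytic continuation" argument, which you spell out via the identity principle applied to the closed analytic subset $(g\circ f)^{-1}(\pi^{-1}(V))$ of $U$; part (2) is the argument of Lemma 5.2 of \cite{UY} (to which the paper simply refers) written out directly, hinging on the $\Gamma$-invariance of $\pi^{-1}(V)$ so that for $\gamma\in\Gamma$ the condition $\dim(\gamma C\cap\pi^{-1}(V)\cap\cF)=1$ collapses to $\gamma C\cap\cF\neq\emptyset$.
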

\begin{proof}
Let $g \in \Sigma(C)$, then
$$
g C \cap \cF \subset \pi^{-1}(V).
$$
By analytic continuation, this implies that $g C \subset \pi^{-1}(V)$.

The proof of (2) is exactly identical to the proof of  \cite{UY}, Lemma 5.2 and relies on the fact that $\pi^{-1}(V)$ is $\Gamma$-invariant.
\end{proof}

From previous lemma and theorem \ref{counting}, we obtain the following.

\begin{lem}
Let
$$
N_{\Sigma(C)}(n) =  |\{ \gamma \in \Gamma \cap \Sigma(C) : l(\gamma) \leq n \}|.
$$
For all $n$ large enough, 
$$
N_{\Sigma(C)}(n) \geq e^{cn}.
$$
\end{lem}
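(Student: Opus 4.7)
The plan is to deduce this lemma as a direct corollary of Theorem \ref{counting} together with Lemma \ref{props}(2), by exhibiting a length-preserving injection from the set counted by $N_C(n)$ into $\Sigma(C)\cap\Gamma$. The map will simply be $\gamma \mapsto \gamma^{-1}$.

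First I would invoke Lemma \ref{props}(2) to identify $\Sigma(C)\cap\Gamma$ with $\Sigma'(C)\cap\Gamma = \{\gamma \in \Gamma : \gamma^{-1}\cF \cap C \neq \emptyset\}$; this set is visibly the image of $\{\gamma \in \Gamma : \gamma \cF \cap C \neq \emptyset\}$ under inversion. In particular, any $\gamma$ with $\dim(\gamma \cF \cap C) = 1$ certainly satisfies $\gamma \cF \cap C \neq \emptyset$, hence $\gamma^{-1} \in \Sigma'(C)\cap\Gamma = \Sigma(C)\cap\Gamma$.

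Next I would check that inversion preserves the word length $l$. This is immediate from the fact that the generating set $\cS_{\cF} = \{\gamma \in \Gamma : \gamma \cF \cap \cF \neq \emptyset\}$ is stable under inversion (since $\gamma \cF \cap \cF \neq \emptyset$ if and only if $\gamma^{-1}\cF \cap \cF \neq \emptyset$), so $l(\gamma^{-1}) = l(\gamma)$ for every $\gamma \in \Gamma$.

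Combining the two observations produces an injection
$$
\{\gamma \in \Gamma : \dim(\gamma \cF \cap C) = 1,\ l(\gamma) \leq n\} \hookrightarrow \{\gamma \in \Sigma(C)\cap\Gamma : l(\gamma) \leq n\},
$$
and Theorem \ref{counting} furnishes the lower bound $e^{cn}$ for the cardinality of the left-hand side once $n$ is large. There is no real obstacle in this step; the genuine work is concentrated in Theorem \ref{counting}, and the present lemma is merely a bookkeeping translation of that estimate into the form needed for the subsequent application of the Pila-Wilkie theorem to the definable set $\Sigma(C)$.
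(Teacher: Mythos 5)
Your proof is correct and matches the intent of the paper, which simply states the lemma as a consequence of Lemma \ref{props} and Theorem \ref{counting} without spelling out the bookkeeping: the key observations — that $\dim(\gamma\cF\cap C)=1$ forces $\gamma^{-1}\in\Sigma'(C)\cap\Gamma=\Sigma(C)\cap\Gamma$, and that the word length is preserved under inversion because $\cS_{\cF}$ is symmetric — are exactly what is needed to transfer the estimate $N_C(n)\geq e^{cn}$ to $N_{\Sigma(C)}(n)$.
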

The height $H(\gamma)$ of an element $\gamma$ of $\Gamma$ is defined by viewing $\Gamma$ as a subgroup of some $\GL_m(\ZZ)$ and taking the maximum of the absolute values of the entries.
If $l(\gamma) \leq n$, then $H(\gamma) \leq (mA)^n$ where $A$ is the maximum of heights of elements of $\cS_{\cF}$. 

Let now 
$$
\Theta(\Sigma(C), T) = \{ g \in G(\QQ)\cap \Sigma(C) : H(g) \leq T \}
$$
and 
$$
N(\Sigma(C), T) = |\Theta(\Sigma(C), T)|
$$

\begin{lem} \label{count}
$$
N(\Sigma(C),T) \geq T^{c_1}.
$$
\end{lem}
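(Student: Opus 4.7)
The plan is to convert the word-length count $N_{\Sigma(C)}(n) \geq e^{cn}$ from the previous lemma into a height count by exploiting the elementary bound $H(\gamma) \leq (mA)^{l(\gamma)}$ given just before the statement. In other words, elements of small word length with respect to the generating set $\cS_{\cF}$ automatically have bounded height when $\Gamma$ is viewed as a subgroup of $\GL_m(\ZZ)$.

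First I would fix the constants from the discussion preceding the statement: let $A = \max_{s \in \cS_\cF} H(s)$ and recall that if $\gamma = s_1 \cdots s_n$ with $s_i \in \cS_\cF$, then iterating the submultiplicativity of the matrix sup-norm under multiplication (which loses a factor of $m$ per step) yields $H(\gamma) \leq (mA)^n$. Therefore if one sets $n = n(T) := \lfloor \log T / \log(mA) \rfloor$, every $\gamma \in \Gamma$ with $l(\gamma) \leq n(T)$ satisfies $H(\gamma) \leq T$. By part (2) of Lemma \ref{props}, the intersection $\Gamma \cap \Sigma(C)$ coincides with $\Gamma \cap \Sigma'(C)$, so these $\gamma$'s are honest elements of $\Sigma(C)$ (viewed inside $G(\QQ)$) of height $\leq T$, hence contribute to $\Theta(\Sigma(C), T)$.

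Combining this with the lower bound of the previous lemma gives, for $T$ large enough,
$$
N(\Sigma(C), T) \geq N_{\Sigma(C)}(n(T)) \geq e^{c \, n(T)} \geq T^{c/\log(mA)} \cdot e^{-c},
$$
and absorbing the $e^{-c}$ into a slight decrease of the exponent yields $N(\Sigma(C), T) \geq T^{c_1}$ with, say, $c_1 = c/(2\log(mA))$ for $T$ sufficiently large. Since there is no real obstacle here beyond correctly tracking constants, the proof is a direct reinterpretation of the word-length count as a height count; the only thing one needs to be slightly careful about is the sub-multiplicative factor $m$ coming from matrix multiplication in $\GL_m$, which only affects the constant $c_1$ and not the polynomial shape of the bound.
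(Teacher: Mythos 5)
Your proof is correct and is exactly the deduction the paper intends (the paper leaves Lemma~\ref{count} without explicit proof, treating it as an immediate consequence of the preceding lemma and the bound $H(\gamma)\leq (mA)^{l(\gamma)}$). One small superfluity: your invocation of part~(2) of Lemma~\ref{props} is not needed here, since the previous lemma already counts elements of $\Gamma\cap\Sigma(C)$, which lie in $G(\QQ)\cap\Sigma(C)$ automatically; it does not affect the correctness of the argument.
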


We now appeal to the Pila-Wilkie theorem (see \cite{PW}, Theorem 1.8).

For a definable (in some o-minimal structure) subset $\Theta\subset \RR^n$, we define $\Theta^{alg}$ to be 
the union of all positive dimensional semi-algebraic subsets contained in $\Theta$. We define $\Theta^{tr}$ to be
$\Theta \backslash \Theta^{alg}$.

\begin{teo}[Pila-Wilkie]
Let $\Theta$ be a subset of $\RR^{n}$ definable in an o-minimal structure.
Let $\epsilon > 0$. There exists a constant $c = c(\Theta,\epsilon)$ such that
for any $T \geq 0$,
$$
|\{ x \in \Theta^{tr} \cap \QQ^n : H(x) \leq T \}| \geq c T^{\epsilon}.
$$
\end{teo}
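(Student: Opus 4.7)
The plan is to prove the Pila--Wilkie counting theorem via its classical two-step strategy: a $C^r$-reparametrization of definable sets, followed by a Bombieri--Pila style determinant argument, wrapped in an induction on dimension. I interpret the stated inequality as the Pila--Wilkie upper bound on the number of rational points in the transcendental part $\Theta^{tr}$, which is the content of the theorem as used elsewhere in the paper.

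First I would reduce, via the o-minimal cell decomposition theorem and a rescaling, to a single bounded definable set $\Theta \subset (0,1)^n$ of pure dimension $k$. The crucial analytic input is the Pila--Wilkie reparametrization theorem (an o-minimal extension of Yomdin--Gromov): for every integer $r \geq 1$, $\Theta$ admits a covering by the images of finitely many definable $C^r$ maps $\phi \colon (0,1)^k \to \Theta$ whose partial derivatives up to order $r$ are all bounded by $1$ in sup-norm. This step is the most delicate. Its proof proceeds by induction on dimension: one invokes cell decomposition, then on each cell one constructs, using only o-minimally allowed operations, a definable diffeomorphism with $(0,1)^k$ that tames the growth of derivatives. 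The uniformity is made possible by the fact that definable families in an o-minimal structure admit only finitely many combinatorial types, so all constants depend on $\Theta$ only.

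Next, to each chart $\phi$ I would apply the Bombieri--Pila determinant method. Given $D = \binom{k+d}{k}$ rational points $q_1,\dots,q_D$ of height $\leq T$ lying in the $\phi$-image of a sub-cube of side $\delta$, I pull them back to $t_i \in (0,1)^k$ and consider the $D \times D$ matrix $M$ whose $(i,j)$ entry is $\mu_j(\phi(t_i))$, where $\mu_1,\dots,\mu_D$ enumerate monomials of degree $\leq d$ in $n$ variables. A Taylor expansion of each $\mu_j \circ \phi$ to order $r$, combined with the bounded-derivative property, yields $|\det M| \lesssim \delta^{r+1}$; on the other hand clearing denominators exhibits $T^{dn D} \det M$ as a rational integer. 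Taking $\delta \approx T^{-\alpha}$ with $\alpha$ and $r$ chosen sufficiently large in terms of $d$, $n$, $k$, the cleared integer is forced to have absolute value less than $1$ and hence to vanish; so the $D$ rational points all lie on a hypersurface of degree $\leq d$. A union bound over the $\delta^{-k}$ sub-cubes tiling $(0,1)^k$ shows that every rational point of height $\leq T$ in $\Theta$ lies on one of at most $O_{\epsilon}(T^{\epsilon/2})$ algebraic hypersurfaces of bounded degree.

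Finally I would extract the transcendental part by induction on $\dim \Theta$. Each intersection $\Theta \cap V$ with a hypersurface $V$ produced above is again definable; either it has dimension strictly smaller than $\Theta$, in which case the inductive hypothesis applied with $\epsilon/2$ controls $(\Theta \cap V)^{tr} \cap \QQ^n$, or $\Theta \cap V$ contains a positive-dimensional semi-algebraic piece, whose rational points are by definition absorbed into $\Theta^{alg}$ and contribute nothing to $\Theta^{tr}$. Summing the bounds across the $O(T^{\epsilon/2})$ hypersurfaces yields the desired upper bound $c(\Theta,\epsilon) T^{\epsilon}$. The principal obstacle throughout is unambiguously the reparametrization theorem; once $C^r$-charts with bounded derivatives are in hand, the determinant estimate and the inductive assembly of the pieces reduce to careful bookkeeping.
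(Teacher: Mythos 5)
This statement is not proved in the paper at all: it is quoted as Theorem~1.8 of Pila--Wilkie \cite{PW} and then applied as a black box. So there is no ``paper's own proof'' to compare your argument against; the only relevant observations are about whether your sketch is a faithful account of the cited result, and whether you have read the statement correctly.

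You have. You rightly note that the inequality as printed, $|\{x \in \Theta^{tr}\cap\QQ^n : H(x)\le T\}| \geq cT^\epsilon$, is a typo for the intended upper bound $\leq c(\Theta,\epsilon)T^\epsilon$; the lower-bound reading is false in general (the transcendental part may have no rational points at all) and is not what is used in Section~4, where the contrapositive of the upper bound is combined with Lemma~4.4 to produce a semi-algebraic $W\subset\Sigma(C)$. Your outline --- cell decomposition and reduction to bounded pure-dimensional pieces, the o-minimal Yomdin--Gromov $C^r$-reparametrization, the Bombieri--Pila determinant estimate on $\delta$-cubes forcing height-$T$ rational points onto $O(T^{\epsilon/2})$ hypersurfaces of bounded degree, then induction on dimension to discard the algebraic part --- is the standard architecture of the Pila--Wilkie proof and is a correct summary of it at the level of detail given. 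One small slip: the number of monomials you should take for the determinant is governed by the ambient dimension, $D=\binom{n+d}{n}$, rather than $\binom{k+d}{k}$, though this does not affect the shape of the argument. Since the paper treats the theorem as an external input, nothing more than this comparison is called for here.
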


In view of lemma \ref{count}, by Pila-Wilkie theorem, there exists a positive dimensional semi-algebraic subset 
$W \subset \Sigma(C)$ and by (1) of lemma \ref{props}, we have
$W \cdot C \subset \pi^{-1}(V)$.
This finishes the proof of theorem \ref{semialg}.

\end{document}